\title{\LARGE \bf
Singular Arcs on Average Optimal Control-Affine Problems 
}
\newtheorem{theorem}{Theorem}[section]
\newtheorem{lemma}[theorem]{Lemma}
\theoremstyle{definition}
\newtheorem{definition}{Definition}[section]
\def\O{\Omega}
\def\o{\omega}
\DeclareMathOperator\supp{supp}
\DeclareMathOperator*{\argmax}{arg\,max}
\author{M.S. Aronna$^{1}$
, G. de Lima Monteiro$^{1}$ and O. Sierra$^{1}$ 
\thanks{The authors acknowledge the support of the Brazilian agencies FAPERJ (Rio de Janeiro)  for processes E-28/201.346/2021, 210.037/2024, SEI 260003/000175/2024
and  CAPES 88887.488134/2020-00 and CNPq 312407/2023-8}
\thanks{ $^{1}$ M.S. Aronna, G. de Lima Monteiro and O. Sierra are with the Escola de Matem\'atica Aplicada, Funda\c c\~ ao Getulio Vargas EMAp-FGV, Praia de Botafogo 190,   22250-900 Rio de Janeiro - RJ, Brazil.} \thanks{{\tt\small soledad.aronna@fgv.br}}%
\thanks{{\tt\small g.delimamonteiro@gmail.com}}%
\thanks{{\tt\small oscar.fonseca@fgv.br}}
}
\begin{document}

\maketitle



\begin{abstract}
In this paper we address optimal control problems in which the system parameters follow a probability distribution, and the optimization is based on average performance. These problems, known as {\em Riemann-Stieltjes optimal control} or {\em optimal ensemble control} problems, involve uncertainties that influence system dynamics. Focusing on control-affine systems, we apply the {\em Pontryagin Maximum Principle} to characterize singular arcs in a feedback form. To demonstrate the practical relevance of our approach, we apply it to the {\em sterile insect technique}, a biological pest control method. Numerical simulations confirm the effectiveness of our framework in addressing control problems under uncertainty.
\vspace{0.2cm}

{\em Keywords---} optimal control, Riemann-Stieltjes optimal control problems, uncertain dynamics, necessary conditions.
\end{abstract}



\section{Introduction}
This article examines optimal control problems where parameters are described by a probability distribution. Our aim is to optimize the control strategy based on average performance. This category of problems, characterized by internal randomness within the system, is referred to as {\em Riemann-Stieltjes} optimal control problems, {\em average} optimal control, or {\em optimal ensemble} control problems. Our research focuses on the necessary conditions and the characterization of controls in feedback form for a control-affine problem. 
Control-affine systems, characterized by control inputs appearing linearly in their equations of motion, are fundamental in control theory and are described by $\dot{x} = A(x) + B(x) u,$ where $x$ is the state vector, $u$ is the control input, $A(x)$ represents the system's drift dynamics, and $B(x)$ is the control influence matrix. Control-affine models have wide-ranging applications, including robotics and motion planning for trajectory optimization in drones and robotic manipulators \cite{murray1994}, aerospace and autonomous vehicles for fuel-efficient trajectory and attitude control \cite{stengel2004flight}, biological and medical systems for neural stimulation  
among others. Despite their versatility, solving optimal control problems for these systems remains challenging due to nonlinearities and computational demands.

A significant difficulty arises due to the presence of uncertain parameters, represented by $\omega \in \Omega$, which introduce variability and unpredictability into the system dynamics. These uncertainties can stem from modeling inaccuracies, external disturbances, environmental variability, or inherent stochasticity.

In this paper, we deal with the following problem, referred to as problem  $(P)$: 
\begin{equation*}
   \min_{u \in \mathcal{U}_{\rm ad}}J[u(\cdot)], \ \text{where }  J[u(\cdot)]:=\int_\Omega g(x(T,\omega),\omega) \, d\mu(\omega),
\end{equation*}
 subject to the constraints 
 {\small \begin{align*}
\begin{cases}
\dot{x}(t,\omega) = f_0( x(t,\omega),\omega) + \displaystyle f_1(x(t,\omega),\omega)u(t),\\ \hspace{4cm}\text{a.e. } t\in [t_0,T], \ \omega\in \Omega, \\
x(t_0,\omega)=x_0,  \\
u(t) \in U(t), \ \  \text{a.e. } t\in [t_0,T],
\end{cases}
\end{align*}}

\noindent where $f_i\colon \mathbb{R}^n\times\Omega\rightarrow\mathbb{R}^n$ for $i=0,1$ $g 
\colon \mathbb R^n \times \O \rightarrow
\mathbb R$ and  {\small $u:{[t_0,T]}\rightarrow \mathbb{R}$} is the control function, which belongs to the {\em admissible controls set}
{\small \begin{equation}\label{Uad}
\mathcal{U}_{\rm ad} := \left\{ u \in  L^\infty(t_0,T;\mathbb{R}): u(t)\in U(t) \ \text{a.e. } t\in [t_0,T]\right\},
\end{equation}}
where  $U :[0,T] \rightsquigarrow \mathbb R$ is a multifunction taking non-empty closed and convex values contained in a compact subset ${\bf U}\subset \mathbb{R}.$  

The Riemann-Stieltjes framework for uncertain dynamic systems, as discussed in \cite{Ross2015}, has been applied across various disciplines. These include aerospace engineering, as highlighted in \cite{GonzlezArribas2018,Shaffer2016}, military operations \cite{Walton2018}, and affine dynamics for reinforcement learning and system identification \cite{PesareMAY2021,pesare2021convergence}. Numerical investigations were conducted in \cite{Lambrianides2020,Phelps2016}, while \cite{Bettiol2019} extended the theoretical framework to address uncertainties in both dynamics and cost, along with fixed initial conditions. Furthermore, they formulated versions of the Pontryagin Maximum Principle suitable for both smooth and non-smooth scenarios.

In this work, we investigate the existence of optimal controls for problem $(P)$ and apply the Pontryagin Maximum Principle derived in \cite{Bettiol2019} to characterize singular arcs for scalar optimal controls in a feedback form. Finally, we present a model for the implementation of the {\em Sterile Insect Technique} (SIT) as an example. Numerical implementation demonstrates the effectiveness and accuracy of the proposed approach.

The present work is organized as follows. In Section \ref{Preliminaries}, we introduce the preliminaries and notations used throughout the paper. Section \ref{KIC} discusses the existence of optimal controls. In Section \ref{N.O.C}, we verify the hypotheses for the smooth case of the Pontryagin Maximum Principle derived in \cite{Bettiol2019}. Section \ref{control.char} focuses on the characterization of singular arcs in the case of scalar control with commutative vector fields. Section \ref{NUM_EXMP} is dedicated to the numerical implementation of our results, validating the theoretical findings and demonstrating their practical relevance. Finally, the section \ref{conclusions} provides the conclusions.

\section{Preliminaries}\label{Preliminaries}
We use $\mathcal{B}^n$ to represent the collection of Borel subsets of $\mathbb{R}^n$. The notation $\mathcal{B}$- and $\mathcal{L}$- is employed to indicate Borel- and Lebesgue-measurable functions, respectively. The support of a measure $\mu$ defined on $\Omega$ is denoted by $\supp(\mu)$.  

If $X$ is a Banach space, we define $\mathbb{B}$ as the open unit ball centered at the origin in $X$. The space $L^p(0,T;X)$ represents the Lebesgue space of functions defined on the interval $[0,T] \subset \mathbb{R}$ and taking values in $X$. Similarly, $W^{q,s}([0,T];X)$ denotes the Sobolev spaces (see, for instance, Adams \cite{adams1975}). In the special case where $q = r = 1$, we define the corresponding norm as $\|x\|_{W^{1,1}} := \|x\|_{1} + \|\dot{x}\|_{1}$.  

According to \cite{evans2010}, if $ x \in W^{1, p}([0, T]; X) $, then there exists a continuous function $ y \in C([0, T]; X) $ that represents $ x $, meaning that $ x(t) = y(t) $ for almost every $ t \in [s, T] $. The graph of a function $f$ is denoted by ${\rm Gr } f$. A function $\theta:[0,\infty)\rightarrow[0,\infty)$ that is increasing and satisfies $\lim_{s\rightarrow 0^+} \theta(s)=0$ is referred to as a {\em modulus of continuity}.  

Given two smooth vector fields $ f, g: \mathbb{R}^n \rightarrow \mathbb{R}^n $, the {\em Lie bracket} $[f, g]$ defines a new vector field given by:
$$
[f, g] := g'f - f'g,
$$
where, for simplicity, we use the notation $f'$ and $g'$ to represent the Jacobian matrices of $f$ and $g$, respectively.

We introduce the following set of assumptions, which we will refer to as {\bf (H0)} throughout the text:  
\begin{itemize}  
    \item[(i)] The measure $\mu$ is a probability measure defined on a complete separable metric space $(\O, \rho_\O)$. Moreover, for almost every $t \in [t_0, T]$, the set $U(t)$ is a non-empty, compact, and convex subset of $\mathbb{R}^m$, and its graph, $Gr U(\cdot)$, is a $\mathcal{L} \times \mathcal{B}^m$-measurable set.  

    \item[(ii)] For each $i = 0, 1,$ there exist positive constants $c_i$ and $k_i$ such that, for any $x, x' \in \mathbb{R}^n$ and any $\omega \in \Omega$, the function $f_i$ satisfies the following bounds:  
    \begin{equation*}
    |f_i(x,\omega)| \leq c_i  (1+|x|),
    \end{equation*}  
    and the Lipschitz continuity condition:  
    \begin{equation*}
    |f_i(x,\omega)-f_i(x',\omega)|\leq k_i|x-x'|.
    \end{equation*} 
    \item[(iii)] The function $g:\mathbb{R}^n\times\O\rightarrow \mathbb{R}$ is measurable with respect to $\mathcal{B}^n\times\mathcal{B}_\O$. Furthermore, there exist constants $k_g \geq 1$ and $M > 0$ such that, for all $x, x' \in \mathbb{R}^n$ and $\o \in \O$, the function $g$ satisfies:  
    \begin{equation*}
    \begin{aligned}
     |g(x,\o)|&\leq M, \\
     |g(x,\o)-g(x',\o)|& \leq k_g|x-x'|.
    \end{aligned}
    \end{equation*}
\end{itemize}  

An \textit{admissible process} $(u, \{x(\cdot,\o) : \o\in\O\})$ consists of a control function $u \in \mathcal{U}_{\rm ad}$, together with its associated family of trajectories $\{x(\cdot,\o)\in W^{1,1}([0,T], \mathbb{R}^n) : \o\in\O\}$, where for each $\o \in \O$, the function $x(\cdot,\omega)$ satisfies the integral evolution equation:  
{\small \begin{equation}\label{trajectory}
\begin{aligned}
x(t,\o)& = x_0  \\
& \ \ \ \ +\int_{t_0}^{t} \big[ f_0(x(\sigma,\o),\o) + f_1(x(\sigma,\o),\o) u(\sigma) \big] d\sigma.
\end{aligned}
\end{equation}  }

If there exists a process $(\bar{u}, \{\bar{x}(\cdot,\o) : \o\in\O\})$ that solves problem $(P)$, then it is called an {\em optimal pair}. In this case, we refer to $\bar{x}$ as the {\em optimal trajectory} and to $\bar{u}$ as the {\em optimal control}.  

\section{Existence of optimal control}\label{KIC}
To obtain results on the existence of optimal controls and necessary conditions for optimality in problem $(P)$, it is often useful to introduce a general formulation for the system dynamics. Therefore, in suitable cases, we define the function $f$ as  $ f(x,u,\omega):= f_0(x,\omega) + f_1(x,\omega)u.$
We observe that, due to the compactness and convexity of $U(t)$ for almost every $t \in [t_0, T]$, along with the affine structure of the dynamics, for all  $x \in \mathbb{R}^n,$ $\omega \in \Omega.$  
\begin{equation}\label{Comp.Conv}
\begin{aligned}
    f(x, U(t), \omega) \ \text{is compact and convex a.e.} \ t \in [t_0, T].
    \end{aligned}
\end{equation}
Before presenting the existence result, we first study the regularity of the trajectories with respect to the parameters and controls. In the next result, for each control $u \in \mathcal{U}_{\rm ad}, $ the associated trajectory will be denoted by $x_u(\cdot,\omega)$ for all $\omega\in \Omega.$
\begin{lemma}\label{cont.traj}
  Assume that {\bf (H0)} is satisfied. Then, the following properties hold:  
\begin{itemize}
    \item[(i)] For every $u \in \mathcal{U}_{\rm ad}$, the corresponding trajectory $x_u$ remains uniformly bounded:  
    $$|x_u(t,\o)|\leq C_x \quad \text{for all } t\in [t_0,T], \ \o \in \O,$$  
    where $C_x$ is a positive constant that depends on $u$ and the initial condition $x_0$.  

    \item[(ii)] Suppose there exists a modulus of continuity $\theta_{f}$ such that, for any $\o_1,\o_2\in\O,$  
    {\small\begin{equation}\label{f.regular}
    \begin{aligned}
        &\int_{t_0}^T \sup_{x\in\mathbb{R}^n, u\in U(t)} |f(x,u,\o_1) - f(x,u,\o_2)| dt \\ 
        &\hspace{4cm}\leq \theta_{f}(\rho_\O(\o_1,\o_2)),
    \end{aligned}
    \end{equation}}  
    then, for every $u \in \mathcal{U}_{\rm ad}$, the mapping $\omega\mapsto x_u(\cdot,\o)$ is continuous from $\O$ to $W^{1,1}([t_0,T];\mathbb R^n)$.  

    \item [(iii)] For each $\o\in \O,$ the mapping $u\mapsto x_u(\cdot,\o)$ is continuous from ${\mathcal{U}}_{\rm ad}\cap L^2(t_0,T;\mathbb{R}^m)$ to $W^{1,1}([t_0,T];\mathbb R^n)$.
\end{itemize}

\end{lemma}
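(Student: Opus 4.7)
The plan is to treat each of the three parts with Grönwall-type arguments applied to the integral form \eqref{trajectory}, differing only in which quantity is being compared.

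For part (i), I would start from \eqref{trajectory} and exploit the linear growth bound in \textbf{(H0)}(ii) together with the fact that any $u\in\mathcal{U}_{\rm ad}$ is essentially bounded by some constant $C_U$ depending on the compact set $\mathbf{U}$. This gives
\begin{equation*}
|x_u(t,\omega)|\leq |x_0| + \int_{t_0}^t\bigl(c_0 + c_1 C_U\bigr)\bigl(1+|x_u(\sigma,\omega)|\bigr)\,d\sigma,
\end{equation*}
and Grönwall's inequality produces a bound independent of both $t\in[t_0,T]$ and $\omega\in\Omega$, depending only on $|x_0|$, $c_0,c_1$, $C_U$ and $T$.

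For part (ii), fix $u$ and two parameters $\omega_1,\omega_2\in\Omega$. I would estimate $|x_u(t,\omega_1)-x_u(t,\omega_2)|$ by subtracting and adding $f(x_u(\sigma,\omega_2),u(\sigma),\omega_1)$ inside the integral, splitting into a Lipschitz-in-state piece (controlled by $k_0+k_1 C_U$) and a parameter-variation piece, which is precisely bounded by $\theta_f(\rho_\Omega(\omega_1,\omega_2))$ thanks to assumption \eqref{f.regular}. Grönwall's lemma then yields a uniform-in-$t$ estimate of the form $\|x_u(\cdot,\omega_1)-x_u(\cdot,\omega_2)\|_\infty\leq C\,\theta_f(\rho_\Omega(\omega_1,\omega_2))$. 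The $W^{1,1}$ conclusion follows because $\dot{x}_u(\cdot,\omega_i)=f(x_u(\cdot,\omega_i),u(\cdot),\omega_i)$; the same splitting (without the outer time integral) shows that $\|\dot{x}_u(\cdot,\omega_1)-\dot{x}_u(\cdot,\omega_2)\|_{L^1}$ vanishes as $\rho_\Omega(\omega_1,\omega_2)\to 0$.

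For part (iii), fix $\omega$ and consider a sequence $u_n\to u$ in $L^2(t_0,T;\mathbb{R}^m)$ with $u_n,u\in\mathcal{U}_{\rm ad}$. I would write $f_1(x_{u_n},\omega)u_n-f_1(x_u,\omega)u = \bigl(f_1(x_{u_n},\omega)-f_1(x_u,\omega)\bigr)u_n + f_1(x_u,\omega)(u_n-u)$, the first piece being Lipschitz in the state times the uniformly bounded control, the second piece controllable in $L^1$ by the uniform bound on $f_1(x_u,\omega)$ coming from part (i) and the continuous embedding $L^2(t_0,T)\hookrightarrow L^1(t_0,T)$. Combined with the analogous estimate for the drift $f_0$, Grönwall produces $\|x_{u_n}(\cdot,\omega)-x_u(\cdot,\omega)\|_\infty\leq C\,\|u_n-u\|_{L^1}\to 0$, and substituting back into the dynamics gives the $L^1$ convergence of the derivatives.

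I do not anticipate a serious obstacle; the arguments are essentially standard continuous dependence results for ODEs. The only subtlety is bookkeeping: one must ensure that in part (ii) the constants produced by Grönwall depend on $u$ only through $C_U$ (so they are uniform in $\omega$), and in part (iii) that the passage from $L^2$ to $L^1$ is legitimate, which is immediate since $[t_0,T]$ has finite Lebesgue measure.
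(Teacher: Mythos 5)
Your proposal is correct and follows essentially the same route as the paper: Grönwall applied to the integral equation \eqref{trajectory} for the uniform bound in (i), the add-and-subtract decomposition isolating the Lipschitz-in-state piece from the parameter-variation piece bounded by $\theta_f$ in (ii), and the splitting of $f_1(x_{u_n},\omega)u_n-f_1(x_u,\omega)u$ combined with the bound from (i) in (iii). Your explicit treatment of the derivative term in the $W^{1,1}$ norm is a point the paper leaves implicit, but it is not a different argument.
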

\begin{proof} 
In what follows, we will define $c:=k_0+\|u\|_{L^{\infty}}k_1.$

 {\it (i)} For all $u \in \mathcal{U}_{\rm ad}$ and $\o\in\O,$  the corresponding trajectory satisfies the equation \eqref{trajectory}, then for all $t\in [t_0,T]$
\begin{align*}
\big|x_{u}(t,\o)&\big| \leq  \big| x_0\big| \\ &+ \int_{t_0}^t \big|f_0( x_{u}(\sigma,\o),\o) + f_1(x_{u}(\sigma,\o),\o)u(\sigma)\big| d\sigma,
\end{align*}
which in turn, in view of ({\bf H0}) and 
 by Grönwall's inequality, we get  $
\big|x_{u}(t,\o)\big|
\leq \left[\big| x_0\big|+ c(t-t_0)\right]e^{c(t-t_0)}
$
and {\it (1)} follows.

 {\it(ii)} Fix a control $u \in \mathcal{U}_{\rm ad}.$      From \eqref{trajectory}, conditions ({\bf H0}) and \eqref{f.regular}  we have that for all $\o_{1},\o_{2}\in \O$ and all $t\in [t_0,T],$
\begin{equation*}
         \begin{aligned}
             &|x_u(t,\o_{1})-x_u(t,\o_{2})|\leq 
(1+\|u\|_{L^{\infty}})\theta_f(\rho_\O(\o_1,\o_2)) \\
&\quad+c\int_{t_0}^{t}|x_u(\sigma,\o_{1})-x_u(\sigma,\o_{2})|\,d\sigma.
         \end{aligned}
     \end{equation*} 
 From Grönwall's inequality, we have that 
 \begin{equation*}
 \begin{aligned}
     &|x_u(t,\o_{1})-x_u(t,\o_{2})|
     \\
     &\ \ \leq
(1+\|u\|_{L^{\infty}})e^{c(t-t_0) }\theta_f(\rho_\O(\o_1,\o_2)),
\end{aligned}
 \end{equation*}
 and the continuity of the map $\o \mapsto x_u(\cdot,\omega)$ follows.
 
 {\it (iii)} Fix $\o\in\O$ and consider two different controls $u,v\in{\mathcal{U}}_{\rm ad}\cap L^2(t_0,T;\mathbb{R}^m)$ with their respective trajectories $x_{u}
(\cdot,\o)$ and $x_{v}(\cdot,\o).$ From \eqref{trajectory}, 
({\bf H0}), {\it (i)} and  Grönwall's inequality we have that
  {\small  \begin{equation*}
        \begin{aligned}
 |x_{u}(t,\o)-x_{v}(t,\o)| 
 \leq C(t-t_0)\left(1+C_x\right)e^{c(t-t_0)}\|u-v\|_{L^{\infty}}.
        \end{aligned}
    \end{equation*}}
 Analogously, we have that 
   {\small \begin{equation*}
        \begin{aligned}
 |x_{u}(t,\o)-x_{v}(t,\o)| 
 \leq C(t-t_0)^{1/2}\left(1+C_x\right)e^{c(t-t_0)}\|u-v\|_{L^{2}}.
        \end{aligned}
    \end{equation*}}
    and the proof ends.
\end{proof}
The proof of the next existence result will crucially depend on the property \eqref{Comp.Conv} and the sequential compactness of trajectories, as stated in \cite[Theorem 23.2]{Clarke}.
\begin{theorem}[Existence of an Optimal Control]\label{Existence}  
Assume that the set of conditions {\bf (H0)} is satisfied. Then, if there exists at least one admissible process for problem $(P)$, the problem admits an optimal global solution.  
\end{theorem}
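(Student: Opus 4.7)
The plan is to apply the direct method of calculus of variations. Since $|g|\leq M$, the cost $J$ is bounded below, so we may select a minimizing sequence $\{u_n\}\subset \mathcal{U}_{\rm ad}$ with $J[u_n]\to \inf J$. The set $\mathcal{U}_{\rm ad}$ is bounded in $L^\infty(t_0,T;\mathbb{R}^m)$ (since $U(t)\subset {\bf U}$ is compact), convex (each $U(t)$ is convex), and weakly-$*$ closed (each $U(t)$ is closed), so Banach--Alaoglu together with the separability of $L^1$ yields a subsequence, still labeled $\{u_n\}$, with $u_n\rightharpoonup^* \bar u \in \mathcal{U}_{\rm ad}$.

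Next, fix $\omega\in\Omega$ and study the trajectory sequence $\{x_n(\cdot,\omega)\}_{n\in\mathbb{N}}$. By Lemma~\ref{cont.traj}(i) and ({\bf H0})(ii), these functions are uniformly bounded in $C([t_0,T];\mathbb{R}^n)$ with uniformly bounded $L^\infty$-derivatives, hence equicontinuous. Each $x_n(\cdot,\omega)$ is a trajectory of the differential inclusion $\dot{x}\in F(t,x,\omega):=f(x,U(t),\omega)$, whose values are compact and convex by \eqref{Comp.Conv} and whose $x$-dependence is Lipschitz via ({\bf H0})(ii). Applying \cite[Theorem 23.2]{Clarke}, we extract a uniformly convergent subsequence $x_n(\cdot,\omega)\to \bar x(\cdot,\omega)$ whose limit is itself an admissible trajectory of the same inclusion.

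To identify $\bar x(\cdot,\omega)$ with $x_{\bar u}(\cdot,\omega)$, we exploit the affine structure. Combining the uniform convergence of $x_n(\cdot,\omega)$, the Lipschitz bounds on $f_0,f_1$ in ({\bf H0})(ii), and the weak-$*$ convergence $u_n\rightharpoonup^*\bar u$, we pass to the limit in the integral equation \eqref{trajectory}: the drift contribution $\int_{t_0}^t f_0(x_n(\sigma,\omega),\omega)\,d\sigma$ converges by dominated convergence, while the bilinear term $\int_{t_0}^t f_1(x_n(\sigma,\omega),\omega) u_n(\sigma)\,d\sigma$ converges to $\int_{t_0}^t f_1(\bar x(\sigma,\omega),\omega) \bar u(\sigma)\,d\sigma$ because the coefficient converges strongly in $L^1$ while $u_n$ converges weakly-$*$. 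Grönwall's uniqueness then forces $\bar x(\cdot,\omega)=x_{\bar u}(\cdot,\omega)$, so the convergence in fact holds along the full subsequence, for every $\omega$ simultaneously, with a single $\omega$-independent control $\bar u$.

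Finally, we pass to the limit in the cost. For each $\omega$, the Lipschitz bound in ({\bf H0})(iii) gives $g(x_n(T,\omega),\omega)\to g(x_{\bar u}(T,\omega),\omega)$, while $|g|\leq M$ furnishes a $\mu$-integrable majorant, so Lebesgue's dominated convergence yields $J[u_n]\to J[\bar u]=\inf J$, and $(\bar u,\{x_{\bar u}(\cdot,\omega)\}_{\omega\in\Omega})$ is optimal. The principal obstacle is the identification step in the third paragraph: it must be verified that the same weak-$*$ limit $\bar u$ drives the uniform limit of trajectories for \emph{every} $\omega$, rather than only along $\omega$-dependent subsequences. This is precisely where \eqref{Comp.Conv} (guaranteeing no relaxation is needed in the limit inclusion) and the control-affine structure (which linearizes the $u$-dependence once $x$ is fixed) become indispensable.
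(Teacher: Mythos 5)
Your proposal is correct and follows essentially the same route as the paper's proof: a minimizing sequence, weak-$*$ compactness of the controls, the compactness of trajectories from \cite[Theorem 23.2]{Clarke} via \eqref{Comp.Conv}, identification of the limit trajectory through the affine structure (strong $L^1$ convergence of $f_1(x_n,\omega)$ paired with weak-$*$ convergence of $u_n$), and dominated convergence for the cost. Your closing observation---that uniqueness of the solution to \eqref{trajectory} for the fixed limit control $\bar u$ upgrades the $\omega$-dependent subsequence extraction to convergence of the full subsequence for every $\omega$---is a point the paper's proof leaves implicit, and it is resolved exactly as you describe.
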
  
\begin{proof}  
By assumption, there exists at least one admissible process $(u, \{ x(\cdot,\omega) : \omega\in\Omega\})$. Furthermore, from condition {\bf (H0)}-(iii), the function $g$ is bounded from below, ensuring that the infimum of the optimal control problem $(P)$ is finite. Consequently, there exists a minimizing sequence, which means that there is a sequence of admissible processes $(u_k, \{x_k(\cdot,\omega) : \omega\in\Omega\})_k$ such that the corresponding cost sequence converges to the infimum of $(P)$.  

By the definition of $\mathcal{U}_{\rm ad}$ in \eqref{Uad}, it follows that the sequence $\{u_k\}_k$ is uniformly bounded. As a result, there exists a subsequence (which we denote using the same indices for simplicity) such that  $
u_k \overset{*}{\rightharpoonup} \bar{u} \quad \text{in} \quad L^\infty(0, T; \mathbb{R}^m),
$
where $\bar{u} \in \mathcal{U}_{\rm ad}.$ For this subsequence, we examine the corresponding sequence of trajectories $\{x_k(\cdot,\omega)\}_k$. By Lemma \ref{cont.traj}-(i), for each $\omega \in \Omega$, the sequence $\{x_k(\cdot,\omega)\}_k$ is uniformly bounded. Furthermore, since the initial condition set $\{x_k(t_0,\omega) : k \in \mathbb{N}\} = \{x_0\}$ is bounded in $\mathbb{R}^n$, and given that \eqref{Comp.Conv} holds, we can apply \cite[Theorem 23.2]{Clarke}. Thus, for each $\omega \in \Omega$, the sequence $\{x_k(\cdot,\omega)\}_k$ has a subsequence (again, unlabeled for simplicity) that converges uniformly to some function $\bar{x}(\cdot, \omega) \in W^{1,1}$. The final step is to verify that the family $\{\bar{x}(\cdot, \omega) : \omega \in \Omega\}$ represents the trajectory corresponding to the control $\bar{u}$.  
 
Let us observe that, from \eqref{trajectory} and condition ({\bf H0})-(ii), it  follows that
\begin{equation*}
\begin{aligned}
    & \left| x_0 + \int_{t_0}^t f_0(x_k(s,\o),\o) +  f_1(x_k(s,\o),\o)u_{k}(s)ds \right. \\
      & \quad \ \ \left. - x_0 - \int_{t_0}^t f_0(\bar x(s,\o),\o) -  f_1(\bar x(s,\o),\o)\bar u(s) ds \right|  \\ &\leq 
  \int_{t_0}^t k_0 \left|x_k(s,\o) -  \bar{x}(s,\o)\right| ds \\ 
 & \ \ \ \ \ +\int_{t_0}^t \left| \left(f_1(x_k(s,\o),\o) - f_1(\bar x(s,\o),\o) \right)\bar u(s)\right| ds \\
   & \ \ \ \ \ + \int_{t_0}^t \left|\left(u_{k}(s) - \bar u(s) \right)f_1(x_k(s,\o),\o) \right| ds.
\end{aligned}
\end{equation*}
Note that, for each $\o\in \O,$ $f_1(x_k(s,\o),\o)$ converges uniformly to $f_1(\bar x(s,\o),\o)$ on $[t_0,T].$ In particular, it converges in $L^1(t_0,T;\mathbb{R}^n).$ Using this fact, and since $u_{k}$ converges weak-* to $\bar{u}$ in $L^{\infty}(t_0,T;\mathbb{R}^n),$ we get that the right-hand side of latter inequality converges to zero.
We get then that 
$$
\bar x(t,\o)= x_0 + \int_{t_0}^t [f_0(\bar x(s,\o),\o) +  f_1(\bar x(s,\o),\o)\bar{u}_{i}(s)]\,ds
$$
which yields that $\bar x(\cdot, \omega)$ is the trajectory associated to $\bar u,$ for each $\omega.$ The remainder of the proof is devoted to showing that the cost function of the sequence converges to the cost of the process $(\bar{u}, \{ \bar{x}(.,\o) : \o\in\O\})$. Recall that $g$ is  uniformly bounded in $\Omega$ in view of condition {\bf(H0)}-(iii). Then, by the Dominated Convergence Theorem,
{\small $$
    \lim_{k \rightarrow \infty} \int_\O g(x_{k}(T,\o),\o) d\mu(\o) = \int_\O g(\bar x(T,\o),\o) d\mu(\o).
$$}
This concludes the proof.
\end{proof}
\section{Necessary optimality conditions}\label{N.O.C}
This section focuses on verifying that the hypotheses of the smooth case version of the Pontryagin maximum principle, as discussed in Bettiol-Khalil \cite{Bettiol2019}, hold within our framework. We start by presenting some key definitions and supplementary regularity assumptions.

\begin{definition}[$W^{1,1}$-local minimizer]
    \label{w11_minimizer}
    A process  $(\bar{u}, \{\bar{x}(\cdot,\o): \o\in \O\})$ is called a {\em $W^{1,1}$-local minimizer} for $(P)$ if there exists $\epsilon>0$ such that
    \begin{equation*}
        \int_\O g(\bar{x}(T,\o),\o)d\mu(\o) \leq
        \int_\O g(x(T,\o),\o)d\mu(\o),
    \end{equation*}
    for every admissible process $(u, \{x(\cdot,\o): \o\in \O\})$ satisfying
    \begin{equation*}
        \| \bar{x} (.,\o) - x(.,\o) \|_{W^{1,1}([0,T];\mathbb{R}^n)} \leq \epsilon \; , \; \forall \o \in \supp (\mu).
    \end{equation*}
\end{definition}

\bigskip

Next, we recall the assumptions and results from Bettiol-Khalil in \cite{Bettiol2019}. Given a $W^{1,1}$-local minimizer $(\bar{u}, \{\bar{x}(\cdot, \omega): \omega \in \Omega\})$ and $\delta > 0$, we refer to the following set of conditions as {\bf (H1)}:

\begin{itemize}\label{A5}
   \item[(i)] There exists a modulus of continuity $\theta_f:[0,\infty)\rightarrow[0,\infty)$ such that for all $\o,\o_1,\o_2\in\O,$ 
   {\small \begin{equation*}
   \begin{aligned}       
   \int_{t_0}^T \sup_{x\in\bar{x}(t,\o)+\delta \mathbb{B}, u\in U(t)} |f(t,x,u,\o_1) &- f(t,x,u,\o_2)| dt \\ 
   &\leq \theta_f(\rho_\O(\o_1,\o_2))
    \end{aligned}
    \end{equation*} }
\item[(ii)] $g(.,\o)$ is differentiable on $\Bar{x}(T,\o)+\delta \mathbb{B}$ for each $\o\in\O,$ with $\nabla_x g(.,\o)$ continuous on $\Bar{x}(T,\o)+\delta \mathbb{B}$ and $\nabla_x g(x,\cdot)$ continuous on $\O$;
\end{itemize}

\begin{itemize}\label{f.contdiff} 
   \item[(iii)] The map $x \mapsto f(t,x,u,\o)$ is continuously differentiable on $\Bar{x}(t,\o)+\delta \mathbb{B}$ for all $u\in U(t), \o\in\O$ almost everywhere in $t \in [t_0,T]$, and $\o \mapsto \nabla_x f(t,x,u,\o)$ is uniformly continuous with respect to $(t,x,u)$.
   \item[(iv)] We assume that the functions $f_i,$ $i=0,1$ are twice continuously differentiable on $\bar{x}(t,\o)+\delta \mathbb{B}$ for all $\o \in \O$ and almost every $t \in [t_0,T]$. Additionally, there exists a constant $C_f^k > 0$ such that for $i \in \{0,1\}$ and $k = 1, 2$, the following conditions hold:
   {\small \begin{equation*}
        \left|\frac{\partial f_i}{\partial x}(\bar{x}(t,\o),\o)\right| < C_{f_i}^1 ,\ \o \in \supp(\mu), \text{ a.e. } t \in [0,T];
    \end{equation*}}
    and
    {\small \begin{equation*}
        \left|\frac{\partial^2 f_i}{\partial x^2}(\bar{x}(t,\o),\o)\right| \leq C_{f_i}^2 ,\ \o \in \supp(\mu), \text{ a.e. } t \in [0,T].
    \end{equation*}}
\end{itemize}

\begin{theorem}{\cite[Theorem 3.3]{Bettiol2019}}
\label{pmp}
Assume that conditions ({\bf H0}) and ({\bf H1}) hold, and let $(\bar{u}, \{\bar{x}(\cdot, \omega) : \omega \in \Omega \} )$ be a $W^{1,1}$-local minimizer for $(P)$. Then, there exists a function $p : [0,T] \times \Omega \rightarrow \mathbb{R}^n$ that is measurable with respect to $I \times \mathcal{B}_\Omega$, such that:
\begin{equation*}
    p(\cdot, \omega) \in W^{1,1}([0,T], \mathbb{R}^n) \quad \text{for all } \omega \in \supp(\mu),
\end{equation*}
{\small \begin{equation}\label{pmp.finite}
\begin{aligned}
    &\int_\Omega p(t,\omega) f(\bar{x}(t,\omega), \bar{u}(t), \omega) \, d\mu(\omega) = \\ &\max_{u \in U(t)} \int_\Omega p(t,\omega) f(\bar{x}(t,\omega), u, \omega) \, d\mu(\omega) \ \text{a.e. } t \in [0,T],
\end{aligned}
\end{equation}}

\begin{equation}\label{pmp:adjoint}
\begin{aligned}
    -\dot{p}(t,\omega) = [\nabla_x f(\bar{x}(t,\omega), \bar{u}(t), \omega)]^\top p(t,\omega), \\
    \text{a.e. } t \in [0,T], \text{ for all } \omega \in \mathrm{supp}(\mu),
    \end{aligned}
\end{equation}

\begin{equation*}
    -p(T,\omega) = \nabla_x g(\bar{x}(T,\omega), \omega), \text{ for all } \omega \in \mathrm{supp}(\mu).
\end{equation*}
\end{theorem}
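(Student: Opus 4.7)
The plan is to follow the classical needle-variation strategy for the Pontryagin Maximum Principle, but with every estimate carried out uniformly in the parameter $\omega$ so that the parameter-integrated cost can be differentiated under the $\mu$-integral. First, I would fix a Lebesgue point $\tau \in (t_0,T)$ of $\bar u$, a value $v\in U(\tau)$, and define the spike perturbation $u_\epsilon$ equal to $v$ on $[\tau-\epsilon,\tau]$ and to $\bar u$ elsewhere. Using (\textbf{H1}) together with Lemma \ref{cont.traj}, I would construct the first-order variation $y(\cdot,\omega)=\lim_{\epsilon\to 0^+}\epsilon^{-1}(x_{u_\epsilon}(\cdot,\omega)-\bar x(\cdot,\omega))$ as the solution on $[\tau,T]$ of the linearized equation $\dot y(t,\omega)=\nabla_x f(\bar x(t,\omega),\bar u(t),\omega)\, y(t,\omega)$ with jump $y(\tau,\omega)=f(\bar x(\tau,\omega),v,\omega)-f(\bar x(\tau,\omega),\bar u(\tau),\omega)$, and show that the convergence and remainder estimate are uniform in $\omega\in\supp(\mu)$ thanks to the uniform $C^1$ and $C^2$ bounds in (\textbf{H1})(iv).

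Next, for each $\omega\in\supp(\mu)$ I would define the costate $p(\cdot,\omega)$ as the unique $W^{1,1}$-solution of the linear adjoint system \eqref{pmp:adjoint} with terminal value $-p(T,\omega)=\nabla_x g(\bar x(T,\omega),\omega)$. Joint $\mathcal L\times\mathcal B_\Omega$-measurability of $p$ will follow from the measurability of the data in (\textbf{H1})(ii)--(iii) together with a Carathéodory-type argument for the linear ODE viewed parameter-wise. A pointwise-in-$\omega$ integration by parts then yields the identity
\begin{equation*}
\nabla_x g(\bar x(T,\omega),\omega)\cdot y(T,\omega) = -p(\tau,\omega)\cdot\bigl[f(\bar x(\tau,\omega),v,\omega)-f(\bar x(\tau,\omega),\bar u(\tau),\omega)\bigr].
\end{equation*}

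Then $W^{1,1}$-local optimality of $\bar u$ will yield $\liminf_{\epsilon\to 0^+}\epsilon^{-1}(J[u_\epsilon]-J[\bar u])\geq 0$. I would justify exchanging the limit with the $\mu$-integral by dominated convergence, using the uniform bounds from Lemma \ref{cont.traj}(i) and continuity of $\nabla_x g$ on a neighborhood of $\{(\bar x(T,\omega),\omega):\omega\in\supp(\mu)\}$ guaranteed by (\textbf{H1})(ii). Combined with the identity above, this produces
\begin{equation*}
\int_\Omega p(\tau,\omega)\bigl[f(\bar x(\tau,\omega),v,\omega)-f(\bar x(\tau,\omega),\bar u(\tau),\omega)\bigr]\,d\mu(\omega)\leq 0
\end{equation*}
for every $v\in U(\tau)$ and a.e.\ $\tau$. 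A measurable-selection argument based on the $\mathcal L\times\mathcal B^m$-measurability of $\mathrm{Gr}\,U(\cdot)$ then upgrades this pointwise-in-$v$ inequality to the integrated maximum condition \eqref{pmp.finite}.

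The step I expect to be hardest is the uniform-in-$\omega$ control of the needle-variation remainder. In the classical PMP one only tracks a single trajectory, whereas here the $o(\epsilon)$ bound on $x_{u_\epsilon}(\cdot,\omega)-\bar x(\cdot,\omega)-\epsilon y(\cdot,\omega)$ must hold uniformly for $\omega$ in the (possibly non-compact) $\supp(\mu)$ in order to differentiate under the parameter integral. This is precisely the role of the modulus-of-continuity hypothesis (\textbf{H1})(i) and the uniform second-derivative bound (\textbf{H1})(iv); the technical core is to combine them, via a Grönwall argument applied to the linearization error, into a single estimate of the form $\sup_{\omega\in\supp(\mu)}|x_{u_\epsilon}(t,\omega)-\bar x(t,\omega)-\epsilon y(t,\omega)|=o(\epsilon)$.
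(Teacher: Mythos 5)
The paper does not actually prove this statement: Theorem~\ref{pmp} is imported verbatim as \cite[Theorem 3.3]{Bettiol2019}, and Section~\ref{N.O.C} only verifies that hypotheses ({\bf H0})--({\bf H1}) hold in the control-affine setting so that the cited result applies. There is therefore no in-paper proof to compare yours against; what you have written is a sketch of a proof of the external result itself.

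Taken on its own terms, your outline is the standard direct route for an averaged maximum principle in the smooth, state-constraint-free case, and the ingredients you invoke (the uniform derivative bounds of ({\bf H1})(iv), the uniform state bound of Lemma~\ref{cont.traj}(i), the terminal condition from ({\bf H1})(ii), dominated convergence to differentiate under the $\mu$-integral) are the right ones. Two points need more care than the sketch gives them. First, the ``a.e.\ $\tau$'' in \eqref{pmp.finite}: for the needle variation one needs $\tau$ to be a Lebesgue point not merely of $\bar u$ but of $t\mapsto f(\bar x(t,\omega),\bar u(t),\omega)$ simultaneously for $\mu$-a.e.\ $\omega$ (equivalently, of the integrated map $t\mapsto\int_\Omega p(t,\omega)f(\bar x(t,\omega),\bar u(t),\omega)\,d\mu(\omega)$ and of its counterparts for a countable dense set of control values); securing a full-measure set of such $\tau$ uses the continuity-in-$\omega$ hypothesis ({\bf H1})(i) together with the separability of $\Omega$, and is exactly where a naive pointwise-in-$\omega$ argument can fail. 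Second, the joint $\mathcal{L}\times\mathcal{B}_\Omega$-measurability of $p$ is asserted rather than derived: the Carath\'eodory-type argument you gesture at needs the continuity of $\omega\mapsto\bar x(\cdot,\omega)$ from Lemma~\ref{cont.traj}(ii), which itself rests on ({\bf H1})(i), so this dependency should be made explicit. Finally, note that the cited reference establishes a more general nonsmooth version (with set-valued subdifferentials in place of gradients); your variational argument, if completed, would yield only the smooth case stated here --- which is, however, all this paper uses.
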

Building on the previous discussions, we confirm that the theorem stated above holds in our framework. Condition \eqref{pmp.finite} will be used to leverage the result and characterize the optimal control.

\section{control characterization}\label{control.char}

In this section, we establish results for the system $(P)$ by leveraging the control-affine framework. We analyze the scalar control case under commutative system dynamics and provide a characterization of singular control for the given system. We begin by examining the regularity properties of the solution to the adjoint system \eqref{pmp:adjoint}. The proof of the following lemma is based on a change of variables and an application of Grönwall's inequality, so we omit the details.
\begin{lemma}\label{lemma_xp}
For a given optimal process $(\bar u, \{\bar x(.,\o) : \o \in \O \})$, there exists a positive constant  $C_p$  such that, for every $\o\in\O$ the following holds true 
\begin{enumerate}[i)]
    \item $|p(t,\o)| \leq C_p$ for all $t\in[0,T]$, 
    \item the map $\o\mapsto p(\cdot,\o)$ is continuous. 
\end{enumerate}
\end{lemma}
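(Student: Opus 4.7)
The adjoint system \eqref{pmp:adjoint} is linear in $p$ with terminal data prescribed by $-\nabla_x g(\bar x(T,\omega),\omega)$. The plan is to perform the time reversal $\tau = T-t$, so that the adjoint system becomes a forward-in-time linear Cauchy problem $\dot q(\tau,\omega) = A^\top(T-\tau,\omega)\,q(\tau,\omega)$ with $q(0,\omega) = -\nabla_x g(\bar x(T,\omega),\omega)$, where $A(t,\omega) := \nabla_x f(\bar x(t,\omega),\bar u(t),\omega)$. Then Grönwall's inequality is applied twice, in direct parallel with the proof of Lemma \ref{cont.traj}.

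For (i), I would bound the initial data of the reversed system uniformly in $\omega$. Since $g(\cdot,\omega)$ is $k_g$-Lipschitz by (H0)-(iii), the gradient, which exists by (H1)-(ii), satisfies $|\nabla_x g(\bar x(T,\omega),\omega)| \leq k_g$, hence $|p(T,\omega)| \leq k_g$. The coefficient $A(t,\omega)$ is bounded by $C^1_{f_0} + \|\bar u\|_{L^\infty} C^1_{f_1}$ for $\omega \in \supp(\mu)$ and a.e.\ $t$, by (H1)-(iv). A single application of Grönwall then yields $|p(t,\omega)| \leq k_g \exp\{(C^1_{f_0} + \|\bar u\|_{L^\infty} C^1_{f_1})(T-t_0)\} =: C_p$.

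For (ii), I would fix $\omega_1, \omega_2 \in \supp(\mu)$, set $\Delta(t) := p(t,\omega_1) - p(t,\omega_2)$, subtract the two adjoint equations, and integrate from $t$ to $T$. The resulting estimate for $|\Delta(t)|$ splits into three pieces: the discrepancy of the terminal conditions, the integrated discrepancy of the coefficient matrices multiplied by the (bounded) $p(\cdot,\omega_2)$, and a recursive term in $|\Delta|$. The first is controlled via joint continuity of $\nabla_x g$ from (H1)-(ii) together with continuity of $\omega \mapsto \bar x(T,\omega)$ from Lemma \ref{cont.traj}-(ii). The second is controlled by the uniform continuity of $\nabla_x f$ with respect to $\omega$ from (H1)-(iii), again combined with Lemma \ref{cont.traj}-(ii) and the bound from (i), producing a single modulus $\theta_p(\rho_\Omega(\omega_1,\omega_2))$. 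A second application of Grönwall then gives $\sup_t |\Delta(t)| \leq C\,\theta_p(\rho_\Omega(\omega_1,\omega_2))$, and the continuity of $\omega \mapsto p(\cdot,\omega)$ in $W^{1,1}([0,T];\mathbb R^n)$ follows directly from the adjoint ODE.

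The main obstacle I anticipate is not any individual estimate but the careful stitching of the several moduli of continuity, namely those coming from $\omega \mapsto \bar x(\cdot,\omega)$, from the composed map $\omega \mapsto \nabla_x f(\bar x(t,\omega),\bar u(t),\omega)$, and from $\nabla_x g$, into a single modulus $\theta_p$ via triangle inequalities and the uniform bounds of (i). Once this bookkeeping is in place, both conclusions follow from the standard Grönwall machinery, which is why the authors choose to omit the details.
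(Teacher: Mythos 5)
Your proposal is correct and matches the paper's approach exactly: the paper omits the proof, noting only that it ``is based on a change of variables and an application of Gr\"onwall's inequality,'' which is precisely your time reversal $\tau = T-t$ followed by two Gr\"onwall estimates. Your filled-in details (the terminal bound $|p(T,\omega)|\leq k_g$ from the Lipschitz constant of $g$, the coefficient bound from {\bf (H1)}-(iv), and the stitching of moduli of continuity for part (ii)) are consistent with the paper's hypotheses.
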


\begin{theorem}
[Characterization of the optimal control]
\label{sing_control_scalar}
Let Assumptions ({\bf H0}) and ({\bf H1}) hold. Then, there exists a $W^ {1,1}$-minimizer $(\bar u, \{\bar{x}(.,\o) : \o \in \O \})$ with $p$ as in Theorem \ref{pmp}.
Additionally, the optimal control $\bar u$ satisfies the conditions
\begin{equation}
\label{controls}
  \bar u(t) =
    \begin{cases}
      u_{\max} & \text{if } \Psi(t) > 0, \\
      u_{\min} & \text{if } \Psi(t) < 0, \\
      \mathrm{ singular } & \text{if } \Psi(t) = 0,
    \end{cases}       
\end{equation}
where $\Psi$ is the {\em switching function} that is given by
\begin{equation}
\label{switching_func}
\Psi(t) = \int_\O p(t,\o)f_1(\bar{x}(t,\omega),\o) d\mu(\o).
\end{equation}
Moreover, over singular arcs the control $\bar u$ satisfies
\begin{equation*}
  \int_\Omega p[f_0,[f_0,f_1]] d\mu(\omega) + \bar u \int_\Omega p[f_1,[f_0,f_1]] d\mu(\omega) =0. 
\end{equation*}
Hence, whenever $\int_\Omega p[f_1,[f_0,f_1]] d\mu(\omega)\neq 0$ the singular arcs verifies
\begin{equation}\label{control_formula}
    \bar u(t) =   -  \frac{\int_\Omega p[f_0,[f_0,f_1]] d\mu(\omega)}{\int_\Omega p[f_1,[f_0,f_1]] d\mu(\omega)} .
\end{equation}
\end{theorem}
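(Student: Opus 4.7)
The plan is to combine Theorem \ref{Existence} and Theorem \ref{pmp} with the affine-in-$u$ structure of the maximization condition \eqref{pmp.finite}. First I would obtain a minimizer $(\bar u,\{\bar x(\cdot,\omega)\})$ from Theorem \ref{Existence} and its adjoint family $\{p(\cdot,\omega)\}$ from Theorem \ref{pmp}. Substituting $f=f_0+uf_1$ into \eqref{pmp.finite} reduces the pointwise maximization to
\begin{equation*}
\bar u(t)\in\argmax_{u\in U(t)}\; u\,\Psi(t),
\end{equation*}
with $\Psi$ as in \eqref{switching_func}. Since $U(t)$ is a compact convex subset of $\mathbb{R}$, hence an interval contained in $[u_{\min},u_{\max}]$, the bang/singular dichotomy \eqref{controls} follows immediately from the sign of $\Psi(t)$.

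For the singular-arc characterization, the strategy is to use the identity, valid for any $C^1$ vector field $g(x,\omega)$ and $\mu$-a.e. $\omega$,
\begin{equation*}
\tfrac{d}{dt}\bigl(p(t,\omega)\cdot g(\bar x(t,\omega),\omega)\bigr) = p\cdot[f_0,g] + \bar u(t)\,p\cdot[f_1,g],
\end{equation*}
which one obtains by differentiating $p\cdot g(\bar x)$, substituting the adjoint equation \eqref{pmp:adjoint} with $\nabla_x f=\nabla_x f_0+\bar u\,\nabla_x f_1$, and regrouping the four resulting terms into the Lie brackets $[f_0,g]=g'f_0-f_0'g$ and $[f_1,g]$. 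Applied with $g=f_1$, the $\bar u$-term disappears because $[f_1,f_1]=0$, yielding $\dot\Psi(t)=\int_\Omega p\cdot[f_0,f_1]\,d\mu$. Applying it a second time with $g=[f_0,f_1]$—which is the step that consumes the $C^2$ hypothesis on $f_i$ from (H1)(iv)—produces
\begin{equation*}
\ddot\Psi(t)=\int_\Omega p\cdot[f_0,[f_0,f_1]]\,d\mu+\bar u(t)\int_\Omega p\cdot[f_1,[f_0,f_1]]\,d\mu.
\end{equation*}
On a singular interval $\Psi\equiv 0$ forces $\ddot\Psi\equiv 0$, which is exactly the first displayed equation of the theorem; solving when the coefficient of $\bar u(t)$ is nonzero yields the feedback formula \eqref{control_formula}.

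The main obstacle I anticipate is the Leibniz-rule step needed to pass from the pointwise identity above to integrated versions of $\dot\Psi$ and $\ddot\Psi$, i.e.\ to commute $d/dt$ with $\int_\Omega\cdot\,d\mu$. The uniform bound on $p$ and its $\omega$-continuity from Lemma \ref{lemma_xp}, the uniform bound on $\bar x$ from Lemma \ref{cont.traj}(i), and the uniform bounds on the first and second $x$-derivatives of $f_0,f_1$ along the optimal pair from (H1)(iv) together provide the integrable, $\omega$-uniform dominators required; once these estimates are in place, the Lie-bracket bookkeeping is routine and the theorem follows.
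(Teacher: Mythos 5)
Your proposal is correct and follows essentially the same route as the paper: reduce the maximum condition to $\bar u(t)\in\argmax_u u\Psi(t)$, differentiate the switching function twice along the optimal trajectory using the adjoint equation, and justify interchanging $d/dt$ with $\int_\Omega\cdot\,d\mu$ via the uniform bounds from Lemma \ref{lemma_xp} and ({\bf H1})-(iv). Your packaging of the two differentiations into a single reusable identity for $\tfrac{d}{dt}(p\cdot g(\bar x))$ is only a cosmetic reorganization of the paper's explicit computations (and is in fact slightly cleaner on the Lie-bracket sign conventions), so no substantive difference remains.
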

\begin{proof}
From the maximum condition of the Pontryagin Maximum Principle given in Theorem \ref{pmp} applied to our control-affine problem  in the scalar control case, we get that, for a.e. $t \in [0,T],$
\begin{equation*}
    \bar{u}(t) \in \argmax_{u \in [u_{\min},u_{\max}]} \int_\O p(t,\o) \big(f_0 +  u f_1 \big) d\mu(\o),
\end{equation*}
which gives
$
    \bar{u}(t) \in \argmax_{u \in [u_{\min},u_{\max}]}  u\Psi(t).
$
Therefore, if $\Psi(t) \neq 0,$ one gets one of the two situations on the first two rows of \eqref{controls}. When $\Psi(t)=0$, we can differentiate w.r.t. the time variable to get an expression on the control $
\bar u$. 

Omitting the functions dependencies for convenience and using that $\Psi = \int_\O \varphi d\mu(\o)$, with $\varphi = pf_1$ it is clear that
\begin{equation}
\label{dphi_1}
    \left|  \frac{d\varphi}{dt} \right| = \left|p(f_1' f_0 - f_0' f_1) \right|,
\end{equation}
By assumption {\bf (H1)}-(iv) and from \eqref{dphi_1} we get
\begin{equation*}
    \left|  \frac{d\varphi}{dt} \right| \leq C^1|p|\left|f_0 - f_1\right| \text{ for } t\in[0,T] ,
\end{equation*}
where $C^1:=\max\big\{C^1_{f_0},C^1_{f_1}\big\}$ and with 
Lemma 
\ref{lemma_xp}, we obtain
$
 \left|  \frac{d\varphi}{dt} \right| \leq K^1 \text{ for } t\in[0,T]
$
for some constant $K^1$. Therefore passing the derivative under the integral sign is allowed and results in
\begin{equation*}
    \dot{\Psi} = \int_\Omega p[f_1,f_0]d\mu(\omega)=0.
\end{equation*}
Defining $\eta = p[f_1,f_0]$, one can see that
$$
\left| \frac{d\eta}{dt}\right| = -up[f_1,[f_0,f_1]] + p[f_0,[f_0,f_1]]
$$
Condition {\bf (H1)}-(iv) together with Lemma \ref{lemma_xp} provide that there is some constant $K^2$ such that 
$
\left| \frac{d\eta}{dt}\right| \leq K^2.
$
Once again the derivation under the integral sign is well defined leading to
\begin{equation*}
    \ddot{\Psi} = \int_\Omega p[f_1,[f_0,f_1]]u + p[f_0,[f_0,f_1]]d\mu(\omega) = 0.
\end{equation*}
Then, formula \eqref{control_formula} holds true,
provided that $p[f_1,[f_0,f_1]] \neq 0.$
\end{proof}

\section{Numerical examples}\label{NUM_EXMP}
\subsection{Numerical scheme}
To numerically solve the problem $(P)$, we apply the {\em sample average approximation method.} More specifically, at each iteration $k$, a finite set of independent and identically $\mu$-distributed samples, $\O_k = \{\o_i^k\}_{i=1}^k$, is selected from the parameter space $\Omega$. Then, the following {\it standard} finite-dimensional optimal control problem, which we will call $(P)_{k}$  is considered:
\begin{equation*}
  \min_{u} J_{k}[u(\cdot)], \ \text{where }J_{k}[u(\cdot)]:= \frac{1}{k}\sum_{i=1}^{k}g(x(T, \o_i^k),\o_i^k), \\
\end{equation*}
subject to the constraints
\begin{equation*}
 \begin{cases}
\dot{x}(t,\o_{i}^k) = f_0(x(t, \o_{i}^k),\o_{i}^k) + f_1(x(t, \o_{i}^k),\o_{i}^k)u(t), \\
x(0,\o_{i}^k)=x_0, \quad i=1,\dots,k.
\end{cases}
\end{equation*}
 In this section, we assume that the set of admissible controls is defined by $ \hat{\mathcal{U}}_{\rm ad} := \left\{ u \in  \mathcal{U}_{\rm ad} \cap L^2(0,T;\mathbb{R}^m) \right\}. $ 
Note that $\hat{\mathcal{U}}_{\rm ad} $ is a complete and separable metric space with respect to the $ L^2 $-topology. Therefore, we can apply the numerical approach developed in \cite{Phelps2016}, where the authors use 
the Monte Carlo method. This numerical method estimates the objective functional $ J $ defined in problem $ (P) $ by utilizing the sample mean $ J_k $ defined in $ (P)_k $. The convergence properties of the problems $ (P)_k $ are examined in \cite{Phelps2016} through the concept of {\em epi-convergence} of the objective functionals. This type of convergence provides a natural framework for analyzing the approximation of optimization problems, as it allows for the discussion of the convergence of both their minimizers and infima.

\subsection{Example: Sterile Insect Technique }

We consider a model for the implementation of the {\it sterile insect technique} (SIT), which is a biological control tool that aims at reducing the size of an insect population by invading the field with radiation-steriled insects. The model in \eqref{sit_model} below is a modification of the ones in \cite{AronnaDumont} and \cite{BidiCoronHayat}. 

The system consists of four state variables: $A$ representing the insect population in aquatic stage, $F$ adult females, $M$ adult males, and $M_s$ adult sterile individuals. The control $u$ represents the rate at which sterile males are released into the population. The problem of optimizing the sterile insect release, which we will refer to as $(P)_{SIT},$ is formulated as follows:
\begin{equation*}
\begin{aligned}
 &\min_{u}J_{\rm SIT}[u(\cdot)], \text{ where } \hspace{5cm}
 \end{aligned}
 \end{equation*}
 {\small \begin{equation*}
J_{\rm SIT}[u(\cdot)]:=\int_\O 
\left( c_1\int_{0}^T u(t) dt + c_2 (F(T) + M(T)) \right) d\mu(\omega),
\end{equation*}}
subject to 
\begin{equation}\label{sit_model}
\begin{cases}
 &\dot A(t,\omega)=\frac{\alpha M(t,\omega)F(t,\omega)}{M(t,\omega) + \gamma M_s(t,\omega)} \left(1-\frac{A(t,\omega)}{k} \right)
 \\
 &\hspace{2.6cm}-\mu_A A(t,\omega) - \nu A(t,\omega),
 \\
&\dot F(t,\omega)=r \nu A(t,\omega) - \mu_F F(t,\omega) ,
\\
&\dot M(t,\omega)=(1-r)\nu A(t,\omega)-\mu_M M(t,\omega),
\\
&\dot M_s(t,\omega)=u(t) - \mu_{s}M_s(t,\omega),
    \end{cases}
\end{equation}
with initial conditions $$(A_0,F_0,M_0,M_{{s}_0}) = (19941, 14956, 12962,0),$$ and where $0\leq u(t) \leq 2\times 10^5$ a.e. $t\in [0,T].$ We consider fixed parameters $T = 90$, $\alpha = 6.66$, $\gamma=0.91$, $r = 0.5$, $k=20000$, $c_1=0.15$, $c_2=200$ and stochastic parameters $\nu \sim \text{Uniform}(0.09,0.11)$, $\mu_A\sim \text{Uniform}(0.009,0.01)$, $\mu_F\sim\text{Uniform}(0.0625,0.0714)$, $\mu_M\sim \text{Uniform}(0.0714,0.083)$, $\mu_S\sim \text{Uniform}(0.111,0.125),$ where $\text{Uniform}(a,b)$ is the uniform distribution over the interval $[a,b].$

After introducing a new state variable $\dot z(t,\omega) = c_1 u(t)$, omitting the arguments, the dynamics is then given by 
\begin{equation*}
\Tilde{f} = \begin{pmatrix}
\frac{\alpha MF}{M + \gamma M_s} \left(1-\frac{A}{k} \right)-\mu_A A - \nu A\\
    r \nu A - \mu_F F\\
    (1-r)\nu A-\mu_M M\\
    -\mu_sM_s\\
    0
\end{pmatrix}
+ u \begin{pmatrix}
    0 \\
    0 \\
    0 \\
    1 \\
    c_1
\end{pmatrix}
\end{equation*}
and denoting the system's state  as $\Tilde{x}=\left(A, F, M, M_s, z \right)^T$ we transform the original problem $(P)_{SIT}$ into a Mayer problem:
\begin{equation}\label{fishing_mayer}
   \begin{aligned}
    & \min_{u} J_M[u], \text{ where }
    \\
    &J_M[u]:=\int_\O  z(T,\o) +c_2 (F(T) + M(T))\, d\mu(\o) \\
    &\text{subject to}\\
   &\begin{cases}
       \dot {\Tilde x}(t,\o) =
       \Tilde{f}(\Tilde{x}(t, \omega), u(t)) \\
        0\leq u(t) \leq 2\times 10^5\quad \text{ a.e. on }[0,T]\\
        \Tilde{x}(0,\o) = (19941, 14956, 12962,0,0)
    \end{cases}
    \end{aligned}
\end{equation}

so the singular arc, if it exists, is given by \eqref{control_formula}.
We approximate the solution of \eqref{fishing_mayer} by solving the discretized version given in  $(P)_k$ for sufficiently large values of $k$. In order to solve  $(P)_k$ we use the GEKKO library \cite{beal2018gekko}, which employs direct discretization of the optimal control problem. For $k = 26$, we obtained the results given in Figures \ref{fig:fishing cost} and \ref{fig:control fishing}, with associated errors $\displaystyle\frac{|J^{24} - J^{25}|}{|J^{24}|}=4.45\times 10^{-4}$ and $\frac{\|u^{24} - u^{25}\| }{\|u^{24}\|}=4.93\times10^{-3}$.
\begin{figure}[H]
    \centering
    \includegraphics[width=0.32\textwidth]{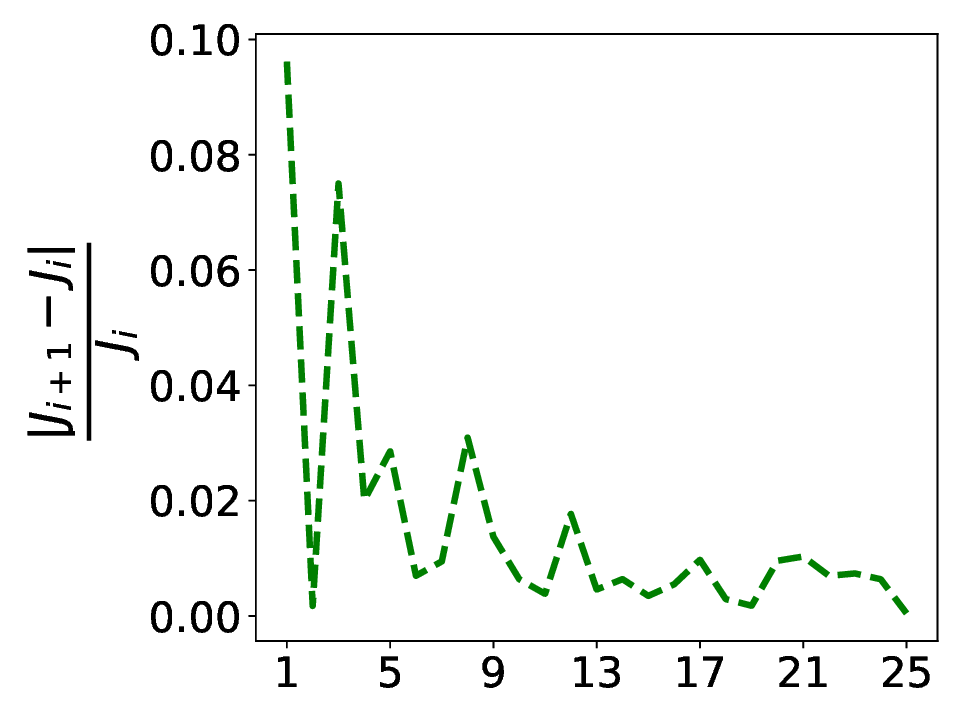}
    \caption{Relative distances of the cost functional between successive iterations.}
    \label{fig:fishing cost}
\end{figure}

\begin{figure}[H]
    \centering
    \includegraphics[width=0.32\textwidth]{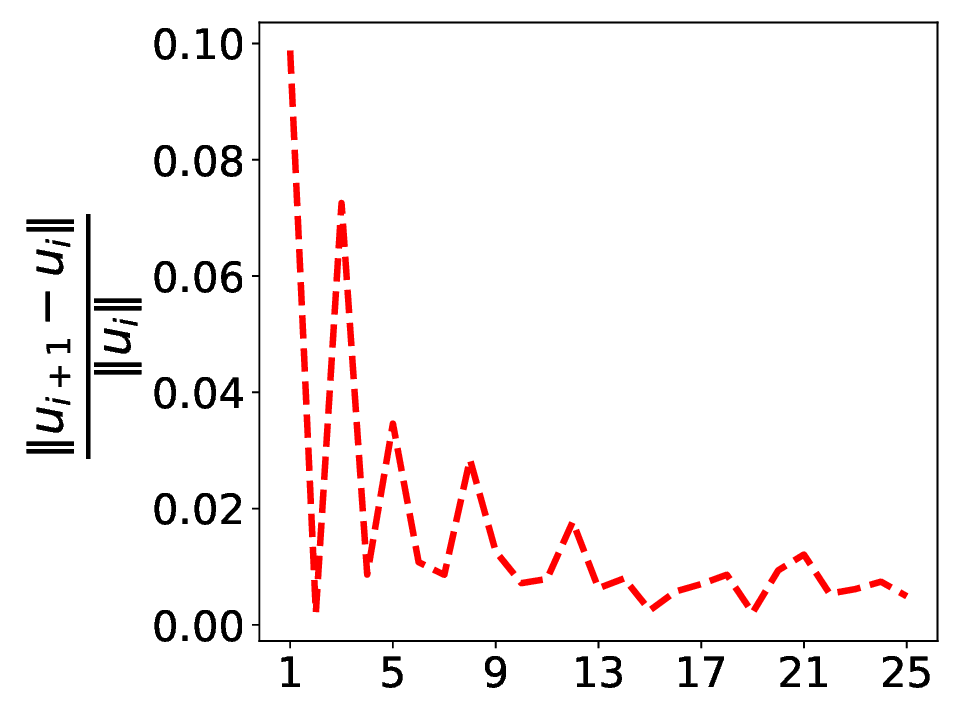}
    \caption{Relative distances of the control between successive iterations.}
    \label{fig:control fishing}
\end{figure}

\begin{figure}[htbp]
\begin{center}
\includegraphics[width=.8\linewidth]{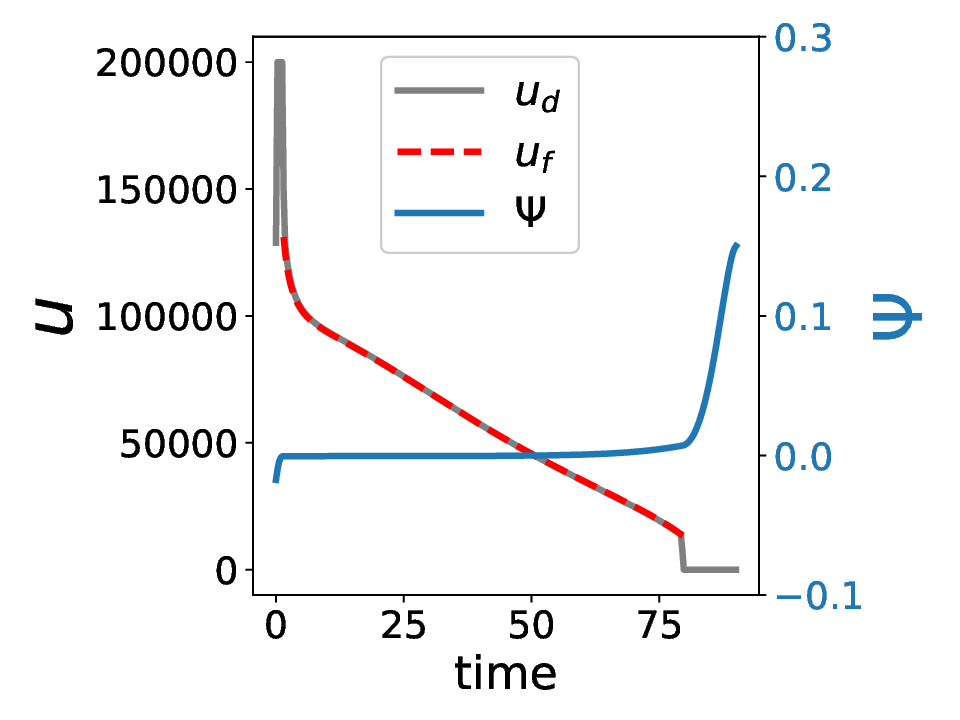}
\caption{In dashed red, the optimal control calculated with the formula \eqref{control_formula}. In grey, the optimal control calculated with the GEKKO library. In blue the switching function $\Psi$ given by \eqref{switching_func}.}\label{fig:control_fishing}
\end{center}
\end{figure}
\section{Conclusions}\label{conclusions}
In this work, we examined the existence of optimal controls for problem $(P)$ and applied the Pontryagin Maximum Principle from \cite{Bettiol2019} to characterize singular arcs for scalar optimal controls in a feedback form. As an application, we introduced a model for the sterile insect technique and validated our approach through numerical implementation. To solve problem $(P)$ computationally, we employed the sample average approximation method. The numerical results confirm the accuracy of our method, as the control trajectories obtained closely align with those generated using Python’s GEKKO optimization library (Figure \ref{fig:control_fishing}), illustrating the validity of our theoretical findings. Additionally, the graphs showing the cost relative distance (Figure \ref{fig:fishing cost}) and control relative distance (Figure \ref{fig:control fishing}) between successive iterations indicate convergence, as both trends approach zero. These results highlight the effectiveness and stability of the proposed framework, suggesting its potential for broader applications in uncertain optimal control problems. 
\bibliographystyle{plain}
\bibliography{ref2}

\begin{thebibliography}{10}

\bibitem{adams1975}
R.~A. Adams.
\newblock {\em Sobolev Spaces. Adams}.
\newblock Pure and applied mathematics. Academic Press, 1975.

\bibitem{AronnaDumont}
M.~S. Aronna and Y.~Dumont.
\newblock On nonlinear pest/vector control via the sterile insect technique: Impact of residual fertility.
\newblock {\em Bulletin of Mathematical Biology}, 82, 08 2020.

\bibitem{beal2018gekko}
L.~Beal, D.~Hill, R.~Martin, and J.~Hedengren.
\newblock Gekko optimization suite.
\newblock {\em Processes}, 6(8):106, 2018.

\bibitem{Bettiol2019}
P.~Bettiol and N.~Khalil.
\newblock Necessary optimality conditions for average cost minimization problems.
\newblock {\em Discrete and Continuous Dynamical Systems - B}, 24(5):2093--2124, 2019.

\bibitem{BidiCoronHayat}
K~Bidi, Coron J.M., Hayat A., and Lichtlé N.
\newblock Reinforcement learning in control theory: A new approach to mathematical problem solving.
\newblock {\em 3rd Workshop on Mathematical Reasoning and AI at NeurIPS’23}, 2023.

\bibitem{Clarke}
F.~Clarke.
\newblock {\em Functional analysis, calculus of variations and optimal control}.
\newblock Graduate texts in mathematics. Springer, London, England, 2013 edition, February 2013.

\bibitem{evans2010}
L.C. Evans.
\newblock {\em Partial Differential Equations}.
\newblock Graduate studies in mathematics. American Mathematical Society, 2010.

\bibitem{GonzlezArribas2018}
D.~Gonz{\'{a}}lez-Arribas, M.~Soler, and M.~Sanjurjo-Rivo.
\newblock Robust aircraft trajectory planning under wind uncertainty using optimal control.
\newblock {\em Journal of Guidance, Control, and Dynamics}, 41(3):673--688, March 2018.

\bibitem{Lambrianides2020}
P.~Lambrianides, Q.~Gong, and D.~Venturi.
\newblock A new scalable algorithm for computational optimal control under uncertainty.
\newblock {\em Journal of Computational Physics}, 420:109710, November 2020.

\bibitem{murray1994}
R.M. Murray, Z.~Li, S.S. Sastry, and S.S. Sastry.
\newblock {\em A Mathematical Introduction to Robotic Manipulation}.
\newblock Taylor \& Francis, 1994.

\bibitem{PesareMAY2021}
A.~Pesare, M.~Palladino, and M.~Falcone.
\newblock Convergence of the value function in optimal control problems with unknown dynamics.
\newblock {\em 2021 European Control Conference (ECC)}, pages 2426--2431, 2021.

\bibitem{pesare2021convergence}
A.~Pesare, M.~Palladino, and M.~Falcone.
\newblock Convergence results for an averaged lqr problem with applications to reinforcement learning.
\newblock {\em Mathematics of Control, Signals, and Systems}, 33(3):379--411, 2021.

\bibitem{Phelps2016}
C.~Phelps, J.~O. Royset, and Q.~Gong.
\newblock Optimal control of uncertain systems using sample average approximations.
\newblock {\em {SIAM} Journal on Control and Optimization}, 54(1):1--29, January 2016.

\bibitem{Ross2015}
I.~M. Ross, R.~J. Proulx, M.~Karpenko, and Q.~Gong.
\newblock Riemann{\textendash}stieltjes optimal control problems for uncertain dynamic systems.
\newblock {\em Journal of Guidance, Control, and Dynamics}, 38(7):1251--1263, July 2015.

\bibitem{Shaffer2016}
R.~Shaffer, M.~Karpenko, and Q.~Gong.
\newblock Unscented guidance for waypoint navigation of a fixed-wing {UAV}.
\newblock In {\em 2016 American Control Conference ({ACC})}. {IEEE}, July 2016.

\bibitem{stengel2004flight}
R.F. Stengel.
\newblock {\em Flight Dynamics}.
\newblock Princeton University Press, 2004.

\bibitem{Walton2018}
C.~Walton, P.~Lambrianides, I.~Kaminer, J.~Royset, and Q.~Gong.
\newblock Optimal motion planning in rapid-fire combat situations with attacker uncertainty.
\newblock {\em Naval Research Logistics ({NRL})}, 65(2):101--119, March 2018.

\end{thebibliography}
\end{document}